\numberwithin{figure}{section}
\numberwithin{table}{section}
\tikzstyle{decision} = [rectangle, draw, fill=blue!20, 
\tikzstyle{block} = [rectangle, draw, fill=magenta!20, 
\tikzstyle{block2} = [rectangle, draw, fill=blue!20, 
\tikzstyle{line} = [draw, -latex]
\tikzstyle{cloud} = [draw, ellipse,fill=red!20, node distance=1cm,
\tikzstyle{empty} = [node distance=4cm]
\title{Analysis and Simulation of the Three-Component Model of HIV Dynamics}
\author{Eric Jones\footnotemark[2] \and Peter Roemer\footnotemark[1]\\ Faculty Advisors: Mrinal Raghupathi\footnotemark[1] \and Stephen Pankavich\footnotemark[2]}
\begin{document}
\maketitle

\renewcommand{\thefootnote}{\fnsymbol{footnote}}
\footnotetext[1]{Department of Mathematics United States Naval Academy, 572C Holloway Road, Chauvenet Hall, Annapolis, MD 21402 ({\tt peteusna@gmail.com}, {\tt raghupat@usna.edu}).}

\footnotetext[2]{Department of Applied Mathematics and Statistics, Colorado School of Mines, Golden, Colorado 80401 ({\tt erijones@mymail.mines.edu}, {\tt pankavic@mines.edu}).}

\renewcommand{\thefootnote}{\arabic{footnote}}

\begin{abstract}
  Mathematical modeling of biological systems is crucial to
  effectively and efficiently developing treatments for medical
  conditions that plague humanity. Often, systems of ordinary
  differential equations are a traditional tool used to describe the
  spread of disease within the body.  We consider the dynamics of the
  Human Immunodeficiency Virus (HIV) in vivo during the initial stages
  of infection. In particular, we examine the well-known
  three-component model and prove the existence, uniqueness,
  and boundedness of solutions. Furthermore, we prove that solutions
  remain biologically meaningful, i.e., are positivity preserving, and
  perform a thorough, local stability analysis for the equilibrium
  states of the system. Finally, we incorporate random coefficients
  within the model and obtain numerical results to predict the
  probability of infection given the transmission of the virus to a
  new individual.
  \end{abstract}

\begin{keywords}
mathematical biology, in-host dynamics, virology, HIV, ordinary differential equations.
\end{keywords}

\begin{AMS}

\end{AMS}

\pagestyle{myheadings}
\thispagestyle{plain}
\markboth{E. JONES AND P. ROEMER}{ANALYSIS AND SIMULATION OF 3CM}

%
%
%
%
%
%

\section{Introduction}
The human body has been studied for hundreds of years, and the
knowledge accumulated to date includes intricate molecular level
mechanisms that determine the ways in which a virus may infect cells
and replicate.  This information is necessary to develop treatments
for maladies ranging from Hepatitis to the Human Immunodeficiency
Virus (HIV) and Acquired Immunodeficiency Syndrome (AIDS).  An
understanding of the accumulation of millions of these interactions
across a timescale of medical relevance permits the creation of better
treatment methods.  As recently as fifty years ago, this could only be
done via experiment and repeated trials.  Today, through the advent of
high speed computing, viral kinetics can be simulated using
mathematical models.  Hence, one can utilize our current understanding
of molecular and cellular dynamics, describe them in mathematical
terms, and simulate interactions within the body to examine the course
of a virus from initial transmission to a long-term infection.  In
this paper we will consider the three-component model (3CM) for
HIV~\cite{BCN, KW, NB, DS, NM}. We will study the infection within a single human host
using this system of differential equations, analyze the resulting
dynamics of the model, and simulate the behavior of solutions.

Generally speaking, HIV infection without the inclusion of
anti-retroviral therapy (ART) is described by a number of distinct
phases~\cite{PerelsonSIAM}. In the early stages of HIV infection,
symptomatic primary infection yields high concentrations of virions
within an individual's blood or tissue.  After several weeks, the
flu-like symptoms disappear and the viral density then declines
rapidly within several days.  This corresponds to an increase in the
amount of cytotoxic T lymphocytes and the subsequent appearance of
anti-HIV antibodies in the blood.  For years afterward, the viral
concentration deviates very little from this low level and the host
typically does not exhibit symptoms of HIV infection, but the
concentration of CD$4^+$ T cells measured in blood slowly
declines. Such a period can last as long as $10$ years. Ultimately,
the viral load increases, the T-cell count drops below $200$ cells per
$\mu L$ of blood, and the symptoms of AIDS appear.

Within the current study, we will focus on the initial stages of
transmission and infection, detailing the quick rise of the virion
population within the body and the noted decrease in the number of
CD$4^+$ T-cells.  Mathematical models for HIV population dynamics have
been used to understand the basic mechanisms involved in the evolution
of the infection at the microscopic level and to ascertain the effects
of anti-retroviral therapy \cite{PerelsonSIAM, PKdB}.  To date,
deterministic models of HIV dynamics have included at least two
components: the population of uninfected CD$4^+$ T-cells and the
density of virus-producing cells.  However, in the construction of a
model for early HIV dynamics one may consider several components as
populations within the the blood or lymphatic tissue, including
latently-infected CD$4^+$ T-cells, macrophages, and actively-producing
infected macrophages.  As previously described, the time-dependent
changes in the cytotoxic T-lymphocytes which directly attack virus
producing CD$4^+$ T-cells could also be included within a model, but
as previously stated, this may not have significant impact within the
earliest stage of infection.  Similarly, effects of time delays
between infection of T-cells and active production of virions, or
multiple compartment models, which couple the dynamics of multiple
infected regions of the body, may also play a role in later stages,
but are omitted from the current study.  Nevertheless, some authors
have found it useful to consider simple models which include only the
density of virions, the uninfected CD$4^+$ T-cell population, and one
class of (actively-productive) infected CD$4^+$ T-cells.  This gives
rise to three-component models as in \cite{BCN, KW, NB}.  We emphasize
that the current article focuses only on the early period, up to a
couple of months after infection, and does not study the later
progression to the acquired immune deficiency syndrome (AIDS) which
may follow without the use of anti-retroviral therapy.

\section{Background and Derivation of Mathematical Model}
 
The general principle behind any sort of mathematical model is to
examine closely the interactions between the quantities being
analyzed. We begin with a small number of initial axioms, and then
follow the implications thoroughly to their conclusion. As a basis for
our model, we utilize a general biological understanding of HIV
dynamics, including infection, replication, and clearance.

In this section we describe the three-component model that has been
widely used in the study of HIV.  There are many biological operators
involved in the interaction between HIV and cells within the human
body. The first group is a subset of the population of lymphocytes,
which in turn are a type of white blood cell. This subset is known as
CD$4^+$ T-cells, or helper T-cells. These T-cells have a variety of
functions, including secreting substances that stimulate the immune
system, in addition to acting as memory agents and regulating the
immune response.  In short, CD4+ T-cells detect and direct immune
system responses to invading bacteria and viruses. Without them, the
body significantly suffers from opportunistic infections that are
greater in severity and duration than they would be if the CD4+
T-cells were otherwise not present. HIV, which refers in this case to
the virus, not the disease or symptoms associated with it, is a
retrovirus that infects helper T-cells. The virus, which is
significantly smaller than the T-cell ($120$ nm in diameter
compared with $7$ $\mu$m in diameter, respectively), breaches the cell wall and
transports its RNA into the T-cell nucleus, where it may remain
dormant for a time. Upon activation, the T-cell ceases its function as
part of the immune system and instead produces additional copies of
HIV. These infected T-cells, along with the healthy T-cells and free
floating HIV, are the populations with which we are concerned, and
will appear within the mathematical model.
 
The second aspect of the model's creation involves incorporating the
interactions between these populations and deriving their
corresponding mathematical representation.  Healthy T-cells are created from stem cells in
the bone marrow, and mature in the thymus. While production of T-cells
does decrease with the aging of the human body, we shall consider it
to be a constant process for two reasons. First, there is no known
method other than this production that can affect T-cell creation, and
second, the time scale of interest within the model is sufficiently
small to consider the T-cell production rate as constant. When
considering removal of these cells, we note that T-cells do age and,
in time, expire. Within the model, we assume that each T-cell
functions for roughly the same amount of time, and thus, the death
rate does not vary over the entire population. Instead, we assume the
overall number of T-cells lost in a group over a certain period of
time is proportional to the number of T-cells within the group. The
other mechanism through which the population of healthy T-cells may
decrease is via infection.  In considering such effects, we utilize an
interaction term that arises from the widely-used ``mass action
principle'' to describe the transfer between populations when the
virus infects healthy T-cells. This mass action term represents the
idea that the rate of interaction, or infection, is directly
proportional to the product of the participating populations, namely
those of virions (or virus particles) and healthy T-cells.
This completes the interactions for the healthy T-cells. We note that
the only way to increase the population of infected T-cells is through
HIV infection. Thus, our model will contain the same mass action term
to describe the removal of the healthy T-cells and the addition of
infected T-cells. Similar to the healthy T-cells, infected T-cells die-off or are cleared by the immune system at a rate proportional to the
size of their current population. The virus, while produced from the
infected T-cells, does not cause the destruction of infected T-cells.
Thus, such a transition is not included within the model.
 
While the virus production rate does differ from cell to cell, we can
assume that the aggregate rate is proportional to the
population of infected T-cells. This is the only mechanism by which
the virus can be created. In contrast, there are two manners in
which the virus can be removed, or cleared, from the body. The first
is through viral infection of T-cells. The act of infecting a healthy
T-cell must technically remove a virus particle from the population of
viruses that can infect further T-cells. However, when considering the
overall population quantities, the amount of viruses lost this way is
minute compared to other methods of creation and destruction. Hence,
we will omit this mechanism.  The second method of removal is known as
viral clearance. It is the removal by the body of individual virus
particles, and is performed at a rate proportional to the current
amount of virus particles within the body.


We denote by $T$, $I$, and $V$, the number of healthy T-cells, infected T-cells and virions respectively. Based on the biological description above we have the following system of three ODEs:

\begin{equation}
\tag{3CM}
\label{3CM}
\left \{ \begin{aligned}
\frac{dT} {dt} &= \lambda -  \mu T - k T V \\
\frac{dI} {dt} &=  k T V - \delta I \\
\frac {dV} {dt} &= p I - c V.
\end{aligned} \right.
\end{equation}

The parameters $\lambda, \mu, k, \delta, p, c$ play an important role
in our later results on viral persistence. Based on biological
considerations we assume that these constants are positive.
Table~\ref{tab1} shows typical values for these constants and the corresponding units. The values are the average values of these parameters from \cite{SCCDHP}.

\begin{table}[ht] 
\vspace{0.1in}
\begin{center}   
\begin{tabular}{|c||c|c|c|c|c|}
	\hline
	Parameter & Biological Process & Minimum & Mean Value & Maximum & Units \\
	\hline
	\hline
	$\lambda$ & T-cell growth rate & $0.043$ & $0.1089$ & $0.2$ & $\mu L^{-1}$ day$^{-1}$ \\
	\hline
	$\mu$ & T-cell death rate & $0.0043$ & $0.01089$ & $0.02$ & day$^{-1}$ \\
	\hline
	$k$ & Infection rate & $1.9 \times 10^{-4}$ & $1.179 \times 10^{-3}$ & $4.8 \times 10^{-3}$ & $\mu L$ day$^{-1}$\\
	\hline
	$\delta$ & Infected T-cell death rate & $0.13$ & $0.3660$ & $0.8$ & day$^{-1}$ \\
	\hline
	$p$ & Virus production rate & $98$ & $1.427 \times 10^3$ & $7.1 \times 10^3$ & day$^{-1}$ \\
	\hline
	$c$ & Viral clearance rate & $3$ & $3$ & $3$ & day$^{-1}$ \\
	\hline
\end{tabular}

\caption{Parameter values for (\ref{3CM}) as observed in \cite{SCCDHP}}
\label{tab1} 
\end{center}
\end{table}


The diagram (Fig \ref{tikz3CM}) provides a brief visual representation of
the mechanisms which govern the system of differential equations. We
see each of the populations within a circle, while the arrows running
to and from each circle describe their respective interaction.

\tikzstyle{line} = [draw, -latex']

\begin{figure}
\begin{center}
\begin{tikzpicture}[place/.style={circle,draw=blue!50,line width=0.4mm, fill=white},
   transition/.style={->,line width=0.4mm}, scale = 0.5]
\node[place] (t) at (0, 0) {$T$};

\node[place](i) at (3, -3) {$I$};

\node[place](v) at (-3, -3) {$V$};

\node[](vpathstart) at (-6, 0) {};

\node[](vpathend)at (0, -6) {};

\node[](tpathstart) at (-3, 3){};

\node[](tpathend) at (6, -6) {};

\node[](tpathend2) at (3, 3) {};

\path
	(tpathstart) edge[transition] node [auto]{{\Large $\lambda$}}(t)
	(t) edge[transition] node[auto] {\Large{$kTV$}}(i)
	(i) edge[transition] node [auto] {\Large{$\delta I$}} (tpathend)
	(vpathstart) edge[transition] node [auto] {\Large{$pI$}} (v)
	(v) edge[transition] node [auto] {\Large{$cV$}} (vpathend)
	(t) edge[transition] node [auto] {\Large{$\mu T$}} (tpathend2);

\end{tikzpicture}
\caption{Illustration of 3CM}
\label{tikz3CM}
\end{center}
\end{figure}
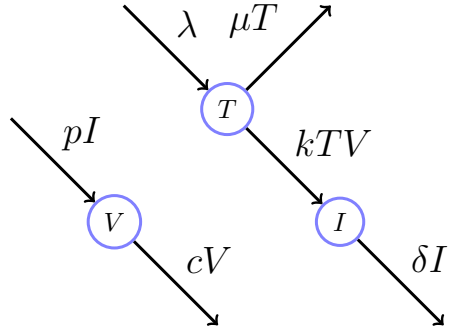

%
%
%
%
%
%
%
%
%
%


\section{Mathematical Analysis}
We now answer some fundamental questions about 3CM. In particular we
will show that solutions to 3CM exist for all positive time, and are
unique. Later we will show that the solution converge to one of two
possible steady-states and that the solutions to 3CM remain positive
given positive initial conditions. This last property is important
since it shows that the model is biologically relevant.

\subsection{Properties of Solutions}

The first step in examining (\ref{3CM}) is to prove that a solution to
the initial-value problem does, in fact, exist, and that this solution
is unique.

\begin{theorem}
\label{T1}
Let $T_0,I_0, V_0 \in \mathbb{R}$ be given. There exists $t_0 > 0$ and
continuously differentiable functions $T, I, V : [0,t_0) \to
\mathbb{R}$ such that the ordered triple $(T,I,V)$ satisfies
(\ref{3CM}) and $(T,I,V)(0) = (T_0,I_0,V_0)$.
\end{theorem}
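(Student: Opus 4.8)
The plan is to recognize (\ref{3CM}) as an autonomous first-order system $\mathbf{u}' = \mathbf{F}(\mathbf{u})$ in $\mathbb{R}^3$, where $\mathbf{u} = (T,I,V)$ and the vector field
\[
\mathbf{F}(T,I,V) = (\lambda - \mu T - kTV,\; kTV - \delta I,\; pI - cV)
\]
has polynomial components. Since every component of $\mathbf{F}$ is a polynomial in $T$, $I$, and $V$, the map $\mathbf{F}$ is $C^\infty$ on all of $\mathbb{R}^3$; in particular its partial derivatives are continuous. This places us squarely in the setting of the Picard--Lindel\"{o}f (Cauchy--Lipschitz) theorem, and the local existence of a $C^1$ solution through $(T_0, I_0, V_0)$ will follow once a local Lipschitz estimate for $\mathbf{F}$ is in hand.

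First I would convert the initial-value problem to the equivalent integral form
\[
\mathbf{u}(t) = \mathbf{u}_0 + \int_0^t \mathbf{F}(\mathbf{u}(s))\, ds, \qquad \mathbf{u}_0 = (T_0, I_0, V_0),
\]
and define the Picard operator $\Phi$ by taking $(\Phi \mathbf{u})(t)$ to be the right-hand side above. I would work in the Banach space $C([0,t_0]; \mathbb{R}^3)$ equipped with the supremum norm, restricted to the closed ball $B_R$ of radius $R$ centered at the constant function equal to $\mathbf{u}_0$. A continuous function is a fixed point of $\Phi$ precisely when it solves the integral equation, and by the fundamental theorem of calculus such a fixed point is automatically continuously differentiable and satisfies (\ref{3CM}) with the prescribed initial data. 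The goal is therefore to show that, for a suitably small $t_0 > 0$, the operator $\Phi$ is a contraction on $B_R$, so that the Banach fixed-point theorem delivers a unique fixed point.

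The hard part will be handling the quadratic coupling term $kTV$, which prevents $\mathbf{F}$ from being globally Lipschitz: a naive bound on $\|\mathbf{F}(\mathbf{u}) - \mathbf{F}(\mathbf{v})\|$ grows with $\|\mathbf{u}\|$ and $\|\mathbf{v}\|$. The remedy is to localize. On the ball $B_R$ the coordinates $T$, $I$, $V$ are bounded in magnitude by $\|\mathbf{u}_0\| + R$, so I would extract a Lipschitz constant $L = L(R)$ for $\mathbf{F}$ on this region (the $kTV$ terms contribute a constant multiple of $k(\|\mathbf{u}_0\| + R)$ via the estimate $|T_1 V_1 - T_2 V_2| \le |T_1|\,|V_1 - V_2| + |V_2|\,|T_1 - T_2|$) together with a uniform bound $M = \sup_{B_R} \|\mathbf{F}\|$. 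Choosing $t_0$ small enough that $t_0 M \le R$ guarantees that $\Phi$ maps $B_R$ into itself, and shrinking $t_0$ further so that $t_0 L < 1$ makes $\Phi$ a contraction. The resulting fixed point then furnishes the desired solution on $[0, t_0)$, completing the argument.
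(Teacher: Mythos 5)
Your proposal is correct and follows essentially the same route as the paper: both establish local Lipschitz continuity of the polynomial vector field and invoke Picard--Lindel\"{o}f to obtain a local $C^1$ solution. The only cosmetic difference is that the paper verifies the Lipschitz condition via boundedness of the Jacobian on compact sets and then cites the theorem, whereas you verify it by a direct product estimate and unpack the theorem's standard contraction-mapping proof.
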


\begin{proof}
  To prove the result, we utilize the classical Picard-Lindel\"{o}f
  Theorem (cf. \cite{Bartle}).  Since the system of ODEs is
  autonomous, it suffices to show that the function $f : \mathbb{R}^3
  \to \mathbb{R}^3$ defined by $$f(y) = \begin{bmatrix}
    \lambda - \mu y_1 - ky_1y_3 \\
    ky_1y_3 - \delta y_2 \\
    py_2 - c y_3
\end{bmatrix}$$
is locally Lipschitz in its $y$ argument.
In fact, it is enough to notice that the Jacobian matrix $$\nabla f(y) = \begin{bmatrix}
-\mu - ky_3 & 0 & -ky_1 \\
k y_3 & - \delta & ky_1  \\
0 & p & - c
\end{bmatrix}$$ is linear in $y$ and therefore locally bounded for
every $y \in \mathbb{R}^3$.  Hence, $f$ has a continuous, bounded
derivative on any compact subset of $\mathbb{R}^3$ and so $f$ is
locally Lipschitz in $y$. By the Picard- Lindel\"{o}f Theorem, there
exists a unique solution, $y(t)$, to the ordinary differential
equation $y'(t)=f(y(t))$ on $[0,t_0]$ for some time $t_0 > 0$.
\end{proof}

Additionally, we may show that for positive initial data, solutions remain positive as long as they exist.  A fortunate byproduct of this result is that the solutions are also bounded.

\begin{theorem}[Boundedness and Positivity]
\label{T2}
Assume the initial conditions of (\ref{3CM}) satisfy $T_0>0$,
$I_0>0$, and $V_0>0$.  If the unique solution provided by Theorem
\ref{T1} exists on the interval $[0,t_0]$ for some $t_0>0$, then the
functions $T(t), I(t),$ and $V(t)$ will be bounded and remain positive
for all $t \in [0,t_0]$.

\end{theorem}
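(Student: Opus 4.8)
The plan is to prove positivity first and then use it to extract the bounds. Since Theorem~\ref{T1} guarantees that $T,I,V$ are $C^1$ (hence continuous and bounded) on the compact interval $[0,t_0]$, I would treat each equation as a scalar linear ODE in its own unknown, with the remaining factors regarded as known continuous coefficients. Solving via integrating factors gives the variation-of-parameters formulas
\begin{align*}
T(t) &= T_0\, e^{-\int_0^t(\mu+kV(s))\,ds} + \lambda\int_0^t e^{-\int_s^t(\mu+kV(\tau))\,d\tau}\,ds,\\
I(t) &= I_0\, e^{-\delta t} + k\int_0^t T(s)V(s)\, e^{-\delta(t-s)}\,ds,\\
V(t) &= V_0\, e^{-ct} + p\int_0^t I(s)\, e^{-c(t-s)}\,ds.
\end{align*}
The first identity already yields $T(t)>0$ on all of $[0,t_0]$ \emph{unconditionally}: $T_0>0$ and $\lambda>0$, while every exponential factor is strictly positive irrespective of the sign of $V$.

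The positivity of $I$ and $V$ is coupled---the formula for $I$ needs $V\ge 0$ and the formula for $V$ needs $I\ge 0$---so I would break the circularity with a first-crossing argument. Assume positivity fails and set $t^{*}=\inf\{t\in[0,t_0] : \min(T(t),I(t),V(t))=0\}$. By continuity and the strictly positive initial data, $t^{*}>0$, the three components are strictly positive on $[0,t^{*})$, and $\min(T,I,V)(t^{*})=0$. Each case is contradictory: $T(t^{*})>0$ by the first identity; since $T,V>0$ on $[0,t^{*})$ the integrand in the $I$-formula is positive, forcing $I(t^{*})>0$; and since $I>0$ on $[0,t^{*})$ the $V$-formula forces $V(t^{*})>0$. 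Thus no such $t^{*}$ exists, and $T,I,V>0$ throughout $[0,t_0]$.

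For boundedness I would exploit the cancellation of the mass-action term $kTV$ between the first two equations. With positivity in hand, the weighted combination $W=T+I+\tfrac{\delta}{2p}V$ obeys
$$\frac{dW}{dt}=\lambda-\mu T-\tfrac{\delta}{2}I-\tfrac{\delta c}{2p}V\le \lambda-\beta W,\qquad \beta:=\min\!\left(\mu,\tfrac{\delta}{2},c\right),$$
where the weight $\tfrac{\delta}{2p}$ is chosen so that the net decay coefficient of $I$ is $\delta-\tfrac{\delta}{2p}\cdot p=\tfrac{\delta}{2}>0$ and the inequality uses $T,I,V\ge 0$. A Gr\"onwall/comparison estimate then gives $W(t)\le\max\!\big(W(0),\lambda/\beta\big)$ on $[0,t_0]$; since $T$, $I$, and $\tfrac{\delta}{2p}V$ are nonnegative and each bounded above by $W$, all three populations are bounded.

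The main obstacle is the circular dependence between the positivity of $I$ and $V$: the integral representations by themselves cannot close the argument, and the first-crossing (minimal-time) device is what resolves it. Once positivity is secured, the unconditional positivity of $T$ and the single-Lyapunov-function bound are routine.
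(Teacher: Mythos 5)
Your proof is correct, and it takes a genuinely different route from the paper's. Both arguments hinge on the same first-crossing device (the paper takes $T^*$ to be the supremum of times up to which all components stay positive and derives a contradiction at $T^*$), but the mechanics differ in two ways. For positivity, the paper works with one-sided differential inequalities ($\frac{dI}{dt}\ge -\delta I$ and $\frac{dV}{dt}\ge -cV$ on the positivity interval), and, crucially, its lower bound on $T$ is \emph{not} unconditional: it must first establish the crude a priori upper bound $I+V\le C_3e^{t^2}$ in order to control the $-kTV$ term, and only then concludes $T(t)\ge T(0)e^{-C_4\int_0^t(1+e^{\tau^2})\,d\tau}>0$. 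Your variation-of-parameters identities sidestep this entirely, since the representation of $T$ is strictly positive regardless of the sign of $V$; this decouples positivity from any growth estimate and shortens the argument. For boundedness, the paper settles for the time-growing bounds $T\le C_1(1+t)$ and $I+V\le C_3e^{t^2}$, which suffice on compact intervals and feed the continuation argument behind the global-existence corollary, whereas your weighted combination $W=T+I+\tfrac{\delta}{2p}V$, exploiting the cancellation of the mass-action term, yields the uniform-in-time bound $W\le\max\left(W(0),\lambda/\beta\right)$ --- a strictly stronger conclusion that serves the continuation and global-existence corollary even more directly. One cosmetic remark: as literally stated, boundedness on the closed interval $[0,t_0]$ is automatic from continuity of the solution (as you note parenthetically); the real substance in both proofs is that the bounds are quantitative and hold on every compact interval, which is what permits extending solutions globally, and on that score your uniform bound is the better one.
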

\begin{proof}
  We assume that $T(t)$, $I(t)$, and $V(t)$ initially have positive
  values. From the previous theorem, there exists a $t^*$ such that
  the solution exists on $[0,t^*]$. Let us denote by $T^*$ the largest
  time for which all populations remain positive, or more precisely
$$ T^* = \sup\{t \in [0,t^*] : T(s), I(s), V(s) > 0 \ \mbox{for all} \ s \in [0,t]\}.$$
Then on the interval $[0,T^*]$ we can make estimate the population values.

Recall that all constants in the system are positive.  Using
this and the positivity of solutions on $[0,T^*]$, we can place lower bounds on $\frac{dI}{dt}$
and $\frac{dV}{dt}$ since 
$$\frac{dI}{dt}=kTV-\delta I \geq -\delta I$$
and 
$$\frac{dV}{dt}=pI-cV \geq -cV.$$
Using an integrating factor, we rewrite these differential inequalities to find
$$ I(t) \geq I(0)e^{-\delta t} > 0$$
and  
$$V(t) \geq V(0)e^{- c t} >0$$
for $t \in [0,T^*]$.  Similarly, we can place an upper bound on
$\frac{dT}{dt}$ so that 
$$\frac{dT}{dt}=\lambda-\mu T -kTV \leq \lambda.$$
Solving for $T$ yields
$$T(t) \leq T(0) + \lambda t \leq C_1(1+t).$$
where the constant $C_1$ satisfies $C_1\geq \max\{\lambda, T(0)\}$.
We can sum the equations for $\frac{dI}{dt}$ and $\frac{dV}{dt}$ and place bounds on this sum so that
$$\frac{d}{dt}(I+V)=kTV-\delta I +  pI - cV  \leq kTV+pI.$$ 
Recall that we have a bound on $T$, so we can substitute
$$\frac{d}{dt}(I+V)  \leq k C_1(1+t)V+pI \leq C_2(1+t)(I+V)$$
where $C_2 \geq \max\{kC_1, p\}$.
Solving the differential equation yields 
$$ (I+V)(t) \leq C_3e^{t^2}$$
for $ t\in[0,T^*]$  where $C_3 > 0$ depends upon $C_2$, $I(0)$, and $V(0)$ only.
Since $I(t)$ is positive, we can place an upper bound on $V$ by 
$$C_3e^{t^2} \geq (I+V)(t) \geq V(t)$$
Additionally, since $V(t)$ is positive, it follows that $I(t)$ must be as well.
$$C_3e^{t^2} \geq (I+V)(t) \geq I(t)$$
With these bounds in place, we can now examine $T(t)$ and bound it from below using
\begin{eqnarray*}
\frac{dT}{dt} & = & \lambda-\mu T -kTV \geq - \mu T - kTV \geq -\mu T - kC_3e^{t^2}T\\
& \geq & -C_4(1+e^{t^2})T
\end{eqnarray*}
for $t \in[0,T^*]$, where $C_4 \geq \max\{\mu, kC_3\}$.
Shifting that last term to the other side of the equation yields
$$\frac{dT}{dt} + C_4(1+e^{t^2})T \geq 0$$
Since we know
$$\frac{d}{dt} ( T(t) + e^{C_4\int_0^t(1+e^{\tau^2}d\tau)} ) \geq 0,$$
then we find for $t \in [0,T^*]$
\begin{equation}
T(t) \geq T(0) e^{-C_4\int_0^t(1+e^{\tau^2}d\tau)} > 0.
\end{equation}

Thus, the values of $T$, $I$, and $V$ stay strictly positive for all
of $[0, T^*]$, including at time $T^*$. By continuity, there must
exist a $t>T^*$ such that $T(t)$, $I(t)$, and $V(t)$ are still
positive. This contradicts the definition of $T^*$, and shows that $T(t)$, $I(t)$, and
$V(t)$ are strictly positive on the entire interval $[0, t^*]$.
Additionally, on this same interval, all of the functions remain
bounded, so the interval of existence can be extended further.  In
fact, the bounds on $T$, $I$, and $V$ derived above hold on any
compact time interval.  Thus, we may extend the time interval on which
the solution exists to $[0,t_0]$ for any $t_0 > 0$ and from the above
argument, the solutions remain both bounded and positive on $[0,t_0]$.
\end{proof}

With this, we have a general idea that the model is sound, and can say
with certainty that it remains biologically valid as long as it began
with biologically-reasonable (i.e, positive) data. This also shows
that once infected, it is entirely possible that the virus may
continue to exist beneath a detectable threshold without doing any
damage.  Finally, we remark that the bounds obtained above ensure the
global existence of solutions.

\begin{corollary}
  Let $T_0,I_0, V_0 > 0$ be given. Then, for any $t_0 > 0$
  there exist continuously differentiable functions $T, I, V : [0,t_0]
  \to \mathbb{R}$ such that the ordered triple $(T,I,V)$ satisfies
  (\ref{3CM}) and $(T,I,V)(0) = (T_0,I_0,V_0)$.
\end{corollary}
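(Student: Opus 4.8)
The plan is to combine the local existence of Theorem~\ref{T1} with the a priori bounds of Theorem~\ref{T2} through a standard continuation argument. First I would apply Theorem~\ref{T1} to obtain a solution on a maximal interval $[0, T_{\max})$, where $T_{\max} \in (0, \infty]$ denotes the supremum of all times $t$ for which a solution exists on $[0,t]$; uniqueness from the Picard-Lindel\"{o}f theorem guarantees that these local solutions agree on overlaps, so this maximal solution is well defined. The goal is to show that $T_{\max} = \infty$, which then yields existence on $[0, t_0]$ for any prescribed $t_0 > 0$ simply by restriction.

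The heart of the argument is to rule out finite-time blow-up. Suppose for contradiction that $T_{\max} < \infty$. The continuation principle for ordinary differential equations asserts that a solution which cannot be extended past $T_{\max}$ must leave every compact subset of $\mathbb{R}^3$, i.e., $\|(T,I,V)(t)\| \to \infty$ as $t \to T_{\max}^-$. But Theorem~\ref{T2} supplies explicit bounds, namely $0 < I(t), V(t) \leq C_3 e^{t^2}$ together with $0 < T(t) \leq C_1(1+t)$, and these are all finite on the compact interval $[0, T_{\max}]$. Hence the solution remains confined to a bounded region, contradicting the blow-up alternative, and we conclude that $T_{\max} = \infty$. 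Equivalently, one can argue more directly: since $(T,I,V)$ stays bounded on $[0, T_{\max})$ and the right-hand side $f$ is continuous, the derivative is bounded, so the limit $(T,I,V)(T_{\max}^-)$ exists; applying Theorem~\ref{T1} with this limit as new initial data extends the solution past $T_{\max}$, once again contradicting maximality.

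The main obstacle here is one of rigor rather than genuine difficulty. The key subtlety is that the bounds from Theorem~\ref{T2} grow like $e^{t^2}$ and so are \emph{not} uniform in time; one must therefore be careful to invoke them only on compact intervals, where they remain finite, rather than attempting to pass to a global-in-time estimate. This is precisely what the estimates in the proof of Theorem~\ref{T2} provide, so the corollary amounts to repackaging that continuation argument into the clean global-existence statement.
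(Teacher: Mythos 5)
Your proposal is correct and follows essentially the same route as the paper, which handles this corollary implicitly in the closing paragraph of the proof of Theorem~\ref{T2}: the a priori bounds hold on any compact time interval, so the local solution from Theorem~\ref{T1} can be continued to $[0,t_0]$ for arbitrary $t_0>0$. Your version merely makes the paper's informal continuation step rigorous by introducing the maximal existence time and invoking the blow-up alternative, which is a welcome sharpening but not a different argument.
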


Thus, given positive initial data and any $t_0 > 0$, we can be certain that the solution stays both positive and bounded on the interval $[0,t_0]$.

\subsection{Steady States}

In order to fully understand the dynamics of the three component
model, it is necessary to first determine values of equilibria.  An
equilibrium point is a constant solution of (\ref{3CM}) so that if the
system begins at such a value, it will remain there for all time. In
other words, the populations are unchanging so the rate of change for
each population is zero.  Setting $\frac {dT} {dt}$,$\frac {dI} {dt}$,
and $\frac {dV} {dt}$ equal to zero and solving the resulting
equations for $T$, $I$, and $V$, we find that there exist exactly two
equilibria. From a biological perspective, we can categorize these
points to be when the HIV virus is either extinct from the body, i.e.,
$I=V=0$, or when the virus persists within the body $(I\neq 0, V \neq
0)$ as $t$ grows large.

We begin by solving for the nonlinear term in (\ref{3CM}) and find
$kTV = \delta I$. Additionally, the final equation implies $I =
\frac{c}{p} V$.  Using the latter equation to substitute for $I$, we
find
$$ kTV = \frac{\delta c}{p} V$$ or $$V \left ( kT - \frac{\delta c}{p}  \right ) = 0.$$  Thus, either $V = 0$ or $T = \frac{\delta c}{kp}$.  In the former case, we find $I = 0$ and thus $T = \frac{\lambda}{\mu}$.  
Hence, the ordered triplet $$(T, I, V) = \left (\frac {\lambda} {\mu}
  , 0 , 0 \right )$$ is an equilibrium solution known as \textbf{viral
  extinction}, since there are no virus particles or infected
cells. We will refer to this point as $P_E$.

In the latter case, $T= \frac {c \delta}{pk}$, and substituting this
value of $T$ into the first equation yields $V =\frac {p \lambda} {c
  \delta} - \frac {\mu} {k}$ and further substitution shows $I = \frac
{\lambda} {\delta} - \frac {\mu c}{kp}$. Thus, a second equilibrium
exists at the point $$(T, I, V) = \left (\frac {c \delta} {pk} , \frac
  {\lambda} {\delta} - \frac {\mu c} {kp} , \frac {p \lambda} {c
    \delta} - \frac {\mu} {k} \right ).$$ Since there are distinct
presences of virus particles and infected T-cells, we refer to this
point as \textbf{viral persistence} and abbreviate the point as $P_P$.

In terms of biology, we can say $P_E$ is the case in which an
infection exists for a short period of time, then is removed from the
body by natural means. The virus does not persist. The second case,
where the system of equations tends to $P_P$, denotes that situation
where the body is unable to clear the infection by itself. If this
ends up being the case, than after a certain period of time, the Three
Component Model loses its applicability as the infection takes a
deeper hold on the body. More complex models, which consider latent
infection, effects of macrophages, or cytotoxic immune response, are
then required to describe the spread of HIV within the body and its
development towards AIDS.

\subsection{Stability Analysis} 

For linear ODEs, it is well-known that the stability properties depend
only upon the eigenvalues of the system.  However, our model
(\ref{3CM}) is nonlinear, and thus we must rely on linearization and a
theorem of Hartman \& Grobman \cite{HartmanGrobman} to unify the local
behavior of the linear and nonlinear systems.

We will investigate the local stability properties of these equilibria
by approximating the nonlinear system of differential equations
(\ref{3CM}) with a linear system at the points $P_E$ and $P_P$. Then,
we locally perturb the system from equilibrium and examine the
resulting long time behavior.  This is done by linearizing the system
about each equilibria, using the Jacobian for (\ref{3CM})
$$
J_{3CM} = \begin{bmatrix}
	-kV-\mu & 0 & -kT \\
	kV & - \delta & kT \\
	0 & p & -c.
\end{bmatrix}
$$
Then, by studying the linearized system $$\dot{z}(t) = J_{3CM}(P)
z(t)$$ we can investigate the stability of each equilibrium point $P=
P_E$ and $P=P_P$.  As we will see below, this property depends only on
a single number, referred to as the basic reproduction number, $R$
given by
\begin{equation}
\label{R}
R = \frac{kp\lambda}{c\delta\mu}.
\end{equation}

We now prove two theorems that demonstrate the relationship between
the value of $R$ and the local asymptotically stability of
equilibria. These results imply that one can simply examine the value
of $R$ to determine whether viral persistence or viral extinction
occurs in the limit of the system as $t \to \infty$. This is a
remarkable result that allows for the estimation of the persistence of
HIV upon initial infection solely by Monte-Carlo simulations by
generating different values of $R$, as in \cite{TuckShip}.
\begin{theorem}
\label{ExtinctStability}
The viral extinction equilibrium $P_E$ given by $$(T, I, V) = \left (\frac {\lambda} {\mu} , 0 , 0 \right )$$
is locally asymptotically stable if and only if $R \leq 1$. 
\end{theorem}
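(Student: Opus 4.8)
The plan is to reduce nonlinear stability to a spectral computation through the Hartman--Grobman theorem invoked in the text, and then to isolate the degenerate boundary case $R = 1$ for separate treatment. First I would evaluate the Jacobian $J_{3CM}$ at $P_E = (\lambda/\mu, 0, 0)$. Since $V = 0$ there, the first column collapses and the matrix takes the block-triangular form
$$J_{3CM}(P_E) = \begin{bmatrix} -\mu & 0 & -k\lambda/\mu \\ 0 & -\delta & k\lambda/\mu \\ 0 & p & -c \end{bmatrix}.$$
Expanding the characteristic polynomial along the first column peels off the eigenvalue $\xi = -\mu < 0$ and leaves the quadratic
$$\xi^2 + (\delta + c)\,\xi + \left(\delta c - \frac{pk\lambda}{\mu}\right) = 0$$
for the $(I,V)$ block.

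Next I would apply the Routh--Hurwitz criterion to this quadratic. Its linear coefficient $\delta + c$ is strictly positive because all parameters are positive, so both roots have negative real part if and only if the constant coefficient is positive, i.e.\ $\delta c - pk\lambda/\mu > 0$. Dividing by $\delta c \mu$ shows this is exactly $R < 1$. Hence for $R < 1$ all three eigenvalues lie in the open left half-plane, $P_E$ is hyperbolic, and Hartman--Grobman gives local asymptotic stability; for $R > 1$ the constant term is negative, the quadratic has a positive real root, and the same theorem yields instability.

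The main obstacle is the boundary case $R = 1$, which the statement places in the stable regime. There the constant coefficient vanishes, the block eigenvalues are $0$ and $-(\delta+c)$, the linearization is non-hyperbolic, and Hartman--Grobman no longer applies. This is not an accident: $R = 1$ is precisely the value at which $P_P$ merges into $P_E$ (one verifies that both the $I$- and $V$-components of $P_P$ vanish when $kp\lambda = c\delta\mu$), so the system undergoes a transcritical bifurcation and the linear test is genuinely inconclusive. To settle it I would pass to the one-dimensional center manifold tangent to the null eigenvector of $J_{3CM}(P_E)$, reduce the flow to a scalar equation $\dot u = a u^2 + O(u^3)$, and determine the sign of $a$. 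Because the null eigenvector points into the positive orthant as $u$ increases, I expect the quadratic reduced flow to be attracting for the physically admissible perturbations, so that $P_E$ remains asymptotically stable within the invariant positive region supplied by Theorem~\ref{T2}. Computing the coefficient $a$ and carefully restricting the stability claim to this biologically relevant region is where the real work lies; a global Lyapunov functional of the form $\alpha I + \beta V$ with suitably chosen weights is the natural fallback should the center-manifold bookkeeping prove unwieldy.
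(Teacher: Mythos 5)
Your proposal is correct, and for the generic case $R \neq 1$ it is essentially the paper's own argument: the same Jacobian at $P_E$, the same factorization of the characteristic polynomial into $(\xi+\mu)$ times the quadratic $\xi^2 + (c+\delta)\xi + \bigl(c\delta - \tfrac{kp\lambda}{\mu}\bigr)$, and the same conclusion that all eigenvalues lie in the open left half-plane precisely when $R < 1$, followed by Hartman--Grobman (the paper reaches the sign condition by writing the roots explicitly via the quadratic formula rather than quoting Routh--Hurwitz for the quadratic; this is cosmetic). The genuine divergence is the boundary case $R = 1$. The paper does not analyze it: it remarks that $P_E$ and $P_P$ coincide there and defers to the Lyapunov-function result of \cite{Korob}, restricting its own linearized analysis to $R < 1$. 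You instead propose a center-manifold reduction and correctly diagnose the transcritical bifurcation. Your caveat about restricting to the biologically admissible region is substantive, not bookkeeping: at $R=1$ the reduced flow $\dot u = au^2 + O(u^3)$ is semi-stable, attracting on one side and repelling on the other, so $P_E$ is \emph{not} asymptotically stable under arbitrary perturbations in $\mathbb{R}^3$; the claim only holds relative to the positively invariant region supplied by Theorem \ref{T2}, a restriction the paper's statement glosses over and which is implicit in the cited Lyapunov argument as well. Your fallback functional $\alpha I + \beta V$ is exactly the right object: with $\alpha = p$, $\beta = \delta$ one computes $\tfrac{d}{dt}(pI + \delta V) = V(pkT - c\delta)$, which is nonpositive once $T \leq \lambda/\mu$ (and $\limsup_{t\to\infty} T(t) \leq \lambda/\mu$ follows from the first equation), and this is the essence of the argument in \cite{Korob}. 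In short, your route does the work the paper outsources; the paper's approach buys brevity, while yours buys a self-contained and more careful treatment of the degenerate case.
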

\begin{proof}
  We begin by computing $J_{3CM}(P_E)$ and determining its
  corresponding eigenvalues, since these values are known to
  characterize the local asymptotic behavior of the associated linear
  system. Specifically, if every eigenvalue possesses negative real
  part, then the equilibrium point will be stable. On the other hand,
  if one or more of the eigenvalues possess positive real part,
  then small perturbations from equilibrium result in magnifications
  of those disturbances, and the unstable manifold is nontrivial.  
  We remark that in the rare event that $R =1$, the equilibria $P_E$ and $P_P$ are identical. 
  Hence, we'll focus on the case in which $R < 1$ since the asymptotic stability can been shown 
  when $R=1$ by using a Lyapunov function \cite{Korob} instead of an analysis of the linearized
  system.

Evaluating the Jacobian at $P_E = \left (\frac{\lambda}{\mu} , 0 , 0 \right )$ results in 
\begin{equation}
J_{3CM}(P_E)  = \begin{bmatrix}
	- \mu & 0 & - \frac {k \lambda} {\mu} \\[4 pt]
	0 & - \delta & \frac {k \lambda} {\mu}  \\[4 pt]
	0 & p & -c \\
\end{bmatrix}
\nonumber
\end{equation}
The corresponding characteristic equation can be written as
$$ \det \left [x\mathbb{I} - J_{3CM}(P_E)\right ] = 0$$ or 
$$ (x+\mu)\left ((x+\delta)(x+c)-\frac{kp\lambda}{\mu} \right) = 0.$$
Thus, $x = - \mu < 0$ is one negative eigenvalue of the system, The
remaining quadratic equation is
$$ x^2+a_1x+a_2  = 0$$
where $ a_1 = c+\delta$ and $a_2 = c\delta - \frac{kp\lambda}{\mu}$.  Thus, the other eigenvalues are
$$ x_\pm = \frac{-(c + \delta) \pm \sqrt{(c+\delta)^2 - 4(c\delta - \frac{kp\lambda}{\mu}})}{2}. $$
Since the first term under the square root is nonnegative, these eigenvalues have negative real part if and only if 
$$4\left (c\delta - \frac{kp\lambda}{\mu} \right ) > 0$$ or $$ 4c \delta(1 - R) > 0.$$  Since all parameters are positive, we see that all eigenvalues
possess negative real part if and only if $R < 1$.  Thus, in this case the origin is a locally asymptotically stable equilibrium for the system
$$\dot{z}(t) = J_{3CM}(P_E) z(t).$$
Finally, by the Hartman-Grobman Theorem, the asymptotic behavior of (\ref{3CM}) is equivalent to that of this
linear system for local perturbations, and the result follows. For a more detailed look at the transference of stability properties from linear to nonlinear systems of ordinary differential equations, see \cite{Logan}. 
\end{proof}

Now that $R$ has been incorporated within the stability analysis, we can rewrite the viral persistence equilibrium in terms of $R$, and notice that it possesses nonpositive population values for $R \leq 1$.  Hence, it should not be surprising that solutions do not tend to this equilibrium for such values of $R$.  However, as long as $R > 1$, we find that viral persistence is stable.

\begin{theorem}
\label{PersistStability}
The viral persistence equilibrium $P_P$ given by $$(T, I, V) = \left (\frac {\lambda}{\mu R}, \frac {\lambda}{\delta R}( R - 1), \frac {\mu} {k} (R- 1) \right )$$
is locally asymptotically stable if and only if $R > 1$. 
\end{theorem}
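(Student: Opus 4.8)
The plan is to mirror the structure of the proof of Theorem~\ref{ExtinctStability}: evaluate the Jacobian $J_{3CM}$ at $P_P$, compute its characteristic polynomial, determine when all of its roots have strictly negative real part, and then transfer the conclusion to the nonlinear system via the Hartman--Grobman Theorem. The evaluation simplifies considerably once I record the two identities satisfied at $P_P$, namely $T = \frac{c\delta}{kp}$ and $kV = \mu(R-1)$, which give the diagonal entry $-kV-\mu = -\mu R$ and the coupling terms $kT = \frac{c\delta}{p}$. Thus I expect
$$
J_{3CM}(P_P) = \begin{bmatrix} -\mu R & 0 & -\frac{c\delta}{p} \\ \mu(R-1) & -\delta & \frac{c\delta}{p} \\ 0 & p & -c \end{bmatrix}.
$$

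Unlike the viral-extinction case, where the characteristic polynomial factored into a linear and a quadratic piece, here I anticipate a genuine cubic that does not split over the rationals. Expanding $\det(x\mathbb{I}-J_{3CM}(P_P))$ along the first column, I expect to obtain $x^3 + a_1 x^2 + a_2 x + a_3$ with $a_1 = c+\delta+\mu R$, $a_2 = \mu R(c+\delta)$, and $a_3 = \mu c\delta(R-1)$. Since the polynomial no longer factors conveniently, I would invoke the Routh--Hurwitz criterion for a cubic: all roots have strictly negative real part if and only if $a_1 > 0$, $a_3 > 0$, and $a_1 a_2 > a_3$.

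The decisive observation is that two of these three conditions hold automatically. Because every parameter and $R$ are positive, $a_1 = c+\delta+\mu R > 0$ without restriction. For the Hurwitz determinant I would expand
$$
a_1 a_2 - a_3 = \mu\left[ R\big((c+\delta)^2 - c\delta\big) + \mu R^2(c+\delta) + c\delta \right],
$$
and then use $(c+\delta)^2 - c\delta = c^2 + c\delta + \delta^2 > 0$, so that every bracketed term is positive and $a_1 a_2 > a_3$ holds for all admissible parameters. Consequently the entire criterion collapses to the single inequality $a_3 = \mu c\delta(R-1) > 0$, equivalent to $R > 1$; this gives local asymptotic stability of the linearization, and hence of $P_P$ by Hartman--Grobman. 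For the converse I would argue that when $R < 1$ the constant term satisfies $a_3 < 0$, so the product of the eigenvalues $-a_3 > 0$ forces the (necessarily real) unpaired root to be positive, yielding a nontrivial unstable manifold; the borderline value $R=1$ is the degenerate case in which $P_P$ coincides with $P_E$ and a zero eigenvalue appears, rendering the linearization non-hyperbolic so that $P_P$ fails to be asymptotically stable. The main obstacle is therefore neither conceptual nor the appeal to Hartman--Grobman, but rather the bookkeeping in the determinant expansion and, above all, the algebraic verification that the Hurwitz quantity $a_1 a_2 - a_3$ is unconditionally positive, since it is precisely this fact that reduces the whole question to the clean threshold $R > 1$.
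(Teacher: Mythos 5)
Your proposal follows essentially the same route as the paper's proof: both evaluate $J_{3CM}$ at $P_P$ (your $R$-parametrized matrix is identical to the paper's, since $\mu R = \frac{kp\lambda}{c\delta}$), obtain the cubic characteristic polynomial with the same coefficients $a_1, a_2, a_3$, and invoke the Routh--Hurwitz criterion, which collapses to $a_3 > 0 \iff R > 1$ because $a_1 a_2 > a_3$ holds unconditionally --- you verify this by full expansion, while the paper uses the shorter chain $a_1 a_2 > c \cdot \frac{k\lambda p}{c} = k\lambda p > k\lambda p - c\delta\mu = a_3$. One caution: your remark that non-hyperbolicity at $R = 1$ by itself rules out asymptotic stability is not valid in general (a zero eigenvalue does not preclude asymptotic stability; a center-manifold or Lyapunov argument would be needed), though the paper's own proof is no more careful about this borderline case.
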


\begin{proof}
The analysis for $P_P$ is similar to that of $P_E$. We first linearize (\ref{3CM}) about $P_P$ and examine the characteristic equation. The Jacobian is slightly more complicated in this case, but it is given by
$$ 
\begin{gathered}
J_{3CM}(P_P)  =
\begin{bmatrix}
	-k\left (\frac {p \lambda} {c \delta} - \frac {\mu} {k} \right )-\mu && 0 && -k \left (\frac {c \delta} {pk} \right ) \\[4 pt]
	k \left (\frac {p \lambda} {c \delta} - \frac {\mu} {k} \right ) && - \delta && k \left(\frac {c \delta} {pk} \right )  \\[4 pt]
	0 && p && -c \\
\end{bmatrix}\\
= 
\begin{bmatrix}
	-\frac{k\lambda p}{c\delta} && 0 && -\frac{c\delta}{p} \\[4 pt]
	\frac{k\lambda p}{c\delta}-\mu && -\delta && \frac{c\delta}{p} \\[4 pt]
	0 && p && -c.
\end{bmatrix}
\end{gathered}
$$
This results in the characteristic equation
\begin{equation}
\left (x + \frac{k\lambda p}{c\delta} \right )\biggl ((x+\delta)(x+c)-c\delta)\biggr )+\frac{c\delta}{p} \left ({\frac{k\lambda p}{c\delta}-\mu} \right )p = 0
\nonumber
\end{equation}
with expanded form
$$x^3+a_1x^2+a_2x+a_3  = 0  $$
where 
$$\begin{gathered}
a_1 = c+\delta +\frac{k\lambda p}{c\delta}\\
a_2 =  \frac{k \lambda p}{\delta} +  \frac{k \lambda p}{c}\\
a_3 = k \lambda p - c \delta \mu.
\end{gathered} $$

In the case of (\ref{3CM}), it is possible to determine the signs of the solutions to this equation using a theorem of Routh and Hurwitz \cite{Routh, Hurwitz}.
According to the Routh-Hurwitz criteria, all roots of this cubic equation possess negative real part if and only if $a_1, a_2, a_3>0$ and $a_1a_2> a_3$. Hence, it is sufficient to show that $R>1$ if and only if the Routh-Hurwitz criteria are satisfied.

Let us first assume $R > 1$.  Then, $a_3= k \lambda p - c \delta \mu = c \delta \mu (R-1) > 0$ and since all the coefficients in the system are positive, $a_1, a_2 > 0$. Additionally,  $$a_1a_2=\left (c+\delta +\frac{k\lambda p}{c\delta} \right ) \left (\frac{k \lambda p}{\delta} +\frac{k \lambda p}{c} \right )>c \cdot \frac{k \lambda p}{c} = k\lambda p>k\lambda p -c\delta\mu=a_3 .$$
Thus, $R>1$ implies that $P_P$ is a locally asymptotically stable equilibrium. The other direction follows trivially since $a_3 > 0$ implies $R > 1$. Hence, this is both a necessary and sufficient condition due to the form of $a_3$, and the proof is complete.

\end{proof}

\begin{figure}[t]
\centering 
\includegraphics[scale = 0.65]{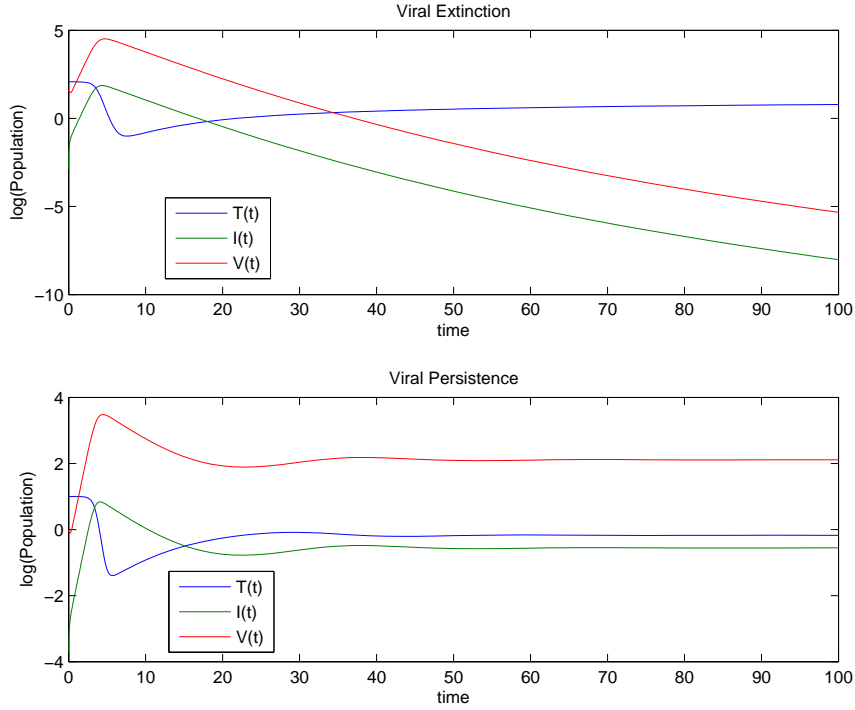}
\caption[Graphs of $R<1$, $R>1$]{ \footnotesize Graphs of solutions to  (\ref{3CM}) with illustrative coefficients. Parameter values yield either $R \leq 1$ (top) or, for those given in Table \ref{tab1}, $R > 1$ (bottom).  In the latter case, $R \approx 15$. Time is measured in days.}
\label{illustrative}
\end{figure}

Our analysis reveals one very important fact about the overall system: for starting values sufficiently close to equilibrium, the long term behavior depends only on the value of $R$. If $R > 1$ then the system tends towards an end state with a non-zero population of infected cells and virions (\textbf{viral persistence}), but if $R \leq 1$ then the final equilibrium is a state with no virus or infection (\textbf{viral extinction}). Figure \ref{illustrative} serves as an example that illustrates solutions in the two different cases given by Theorems \ref{PersistStability} and \ref{ExtinctStability}.  Finally, we also mention that global asymptotic stability of the equilibria can also be shown using a Lyapunov function as in \cite{Korob}.

Unfortunately, our analysis has been restricted to a deterministic model, and hence limited to only one possible solution, whereas in reality the systems under consideration may contain vast uncertainties, especially with regards to the parameters previously described.
In the next section, we incorporate chance mechanisms for population coefficients in order to estimate the probability that a persistent infection develops upon an initial viral load being transmitted to a new host.


\section{Numerical Simulations and the Probability of Persistence}

Both mathematical and biological results support the idea that contact with HIV does not automatically imply the development of a persistent infection. Given factors such as the CD4+ T-cell growth rate, infection rate, and viral clearance rate, the theorems of the previous section display that it is possible to accurately predict the end viral state in the model. While this is very useful, it does not take into account the variability in parameter values amongst a group of individuals. To account for this, we now incorporate random coefficients for the early stages of HIV infection into the model and examine the resulting behavior.  
If these are introduced in a biologically meaningful fashion, we can estimate their contributions to the variability in the early time course of the viral load, which is not
possible with the deterministic coefficients of the previous section.  
Furthermore, we can obtain predictions of the probability that HIV levels reach certain values as a function of time since initial infection. 
Such levels can correspond to thresholds in various tests for the detection of HIV in blood. 

Another study \cite{TuckShip} previously estimated the probability of viral persistence using the three-component model with random variable coefficients by using the results of Theorems \ref{ExtinctStability} and \ref{PersistStability}. In this paper, the authors assumed that each random variable, except for $c$ which was assumed constant, possessed truncated normal distributions with the mean and standard deviation given by a clinical study of $10$ infected patients \cite{SCCDHP}.  Upon sampling from these distributions, the authors computed the value of $R$ and used this to determine the asymptotic behavior of the system as $t \to \infty$, thereby avoiding the need to directly simulate the model itself. The results of their simulations estimated that the average probability of viral extinction was approximately $1-7 \%$. 

There are a few issues with this approach that we plan to remedy in
the current section.  First and foremost, the validity of the model
declines rapidly after several months \cite{PerelsonSIAM, TuckShip} (approximately $100$ days).  Hence, determining viral
persistence or extinction based solely on the asymptotic behavior
of the system as $t \to \infty$ seems problematic.  One can easily find solutions which possess large
viral loads for several weeks, but eventually tend to extinction as $t
\to \infty$.  Another difficulty is that for certain coefficient
values, the virus population in the model may stay quite small for the
first few months after transmission, but then grow steadily to a
persistent steady state over large times.  These possible outcomes can
be seen more clearly within Figure \ref{fig1}.
Thus, instead of using the conditions $R \leq 1$ and $R > 1$ (which provide information
only about the behavior of (\ref{3CM}) in the limit as $t \to \infty$) to determine
whether or not a viral load has established a persistent infection, we will
formulate and utilize new conditions which possess a finite time horizon.
Since many standard tests for HIV currently display a threshold of detection 
of $50$ virions per $\mu$L \cite{RP, Rib}, we will set this as the barrier for viral persistence
at the end time of the model's validity.  Namely, the condition
$V(100) \geq 50$ will represent viral persistence, while $V(100) < 50$
will represent extinction.

\begin{center}
\begin{figure}[H]
  \begin{subfigure}[b]{0.5\textwidth}
\includegraphics[scale=.37]{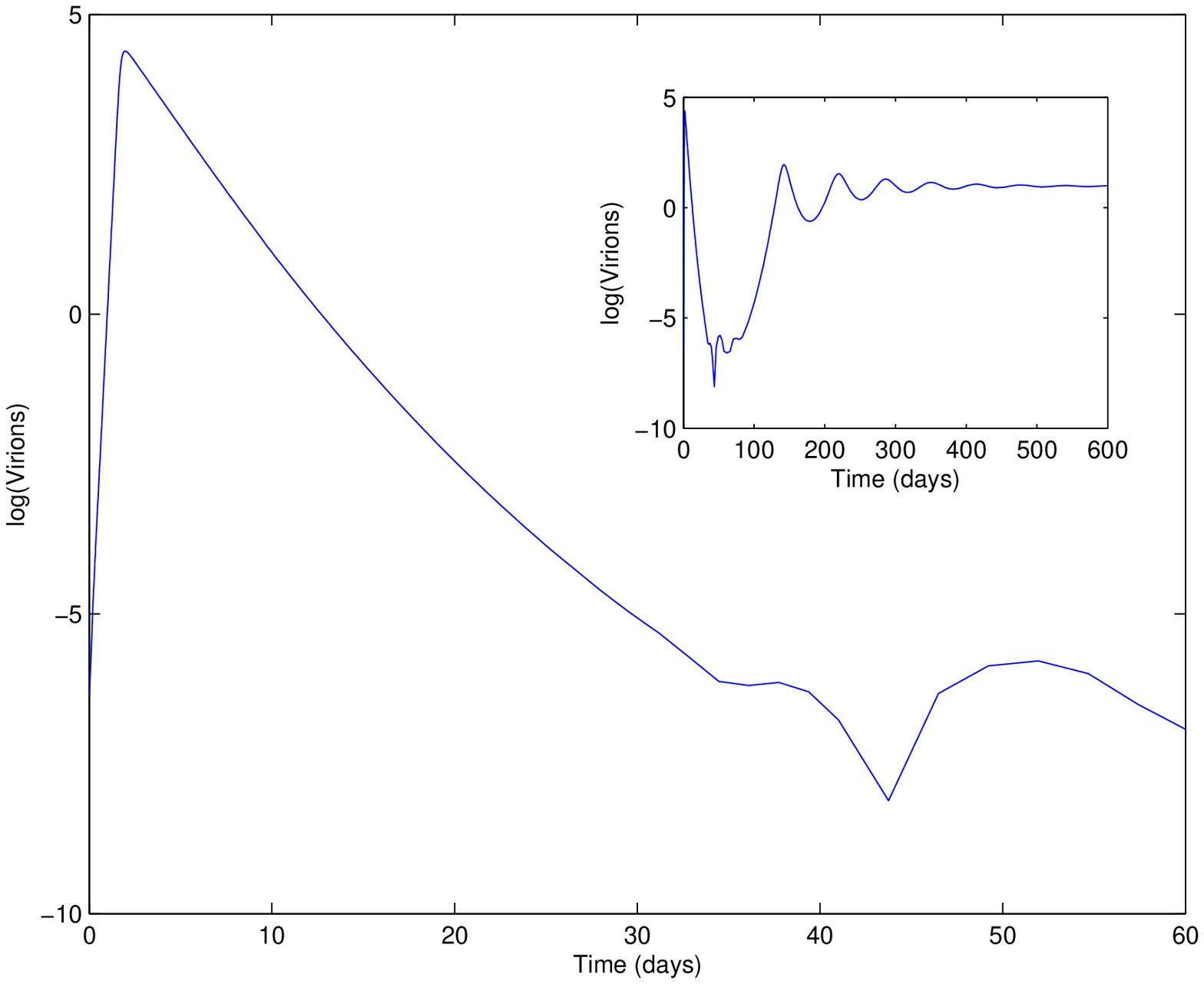}
  \end{subfigure}
\hspace{-0.35in}
\begin{subfigure}[b]{0.5\textwidth}
\includegraphics[scale=.37]{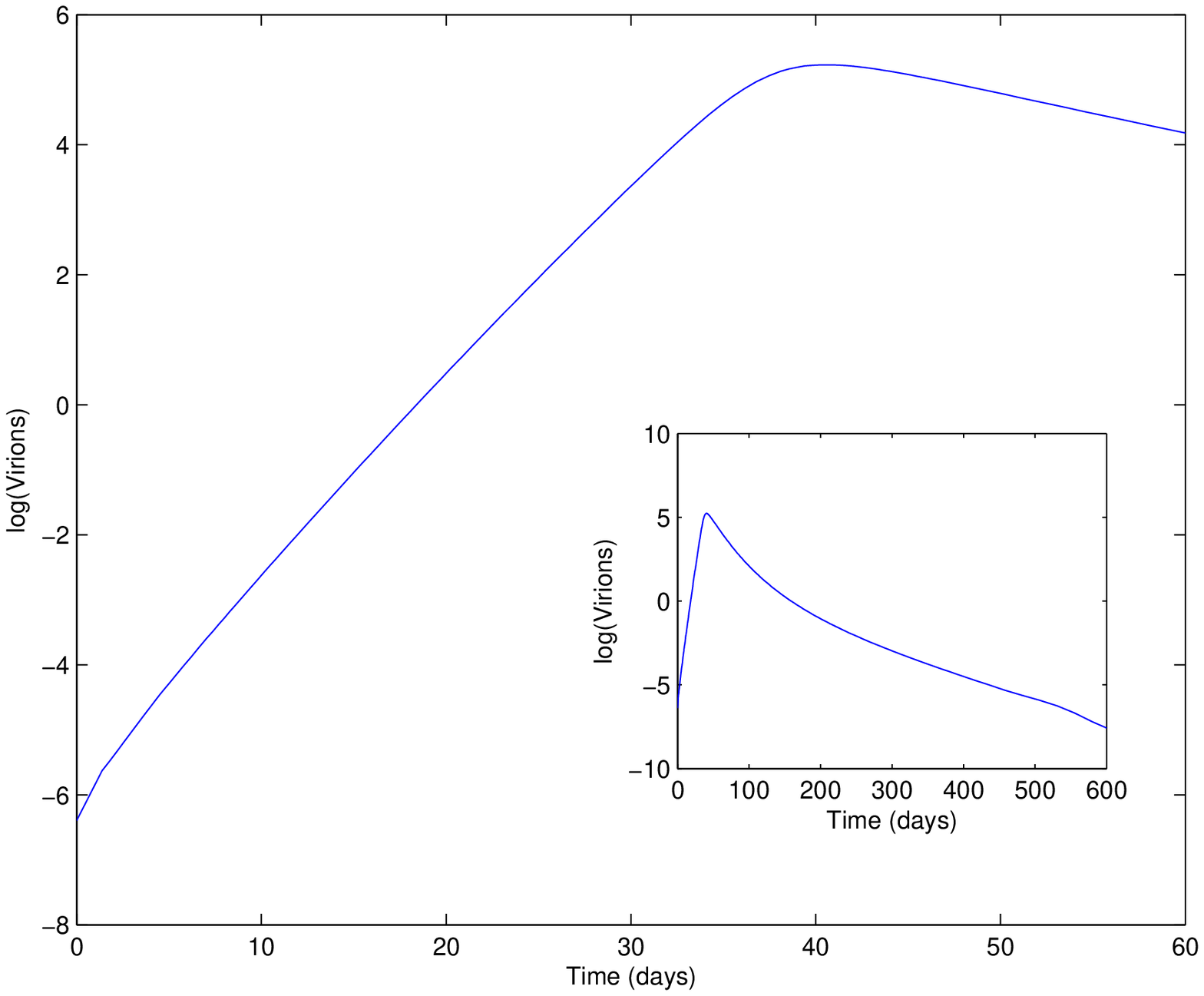}
\end{subfigure}
\caption{ \footnotesize A log plot of the virion population is shown
  (left) for a choice of parameters resulting in $R = 2.03$, yet the
  viral load remains small, around $10^{-7}$ copies per $\mu$ L, up to time $t=60$.  For
  larger times, however, the result of Theorem \ref{PersistStability}
  must apply and hence the viral population rebounds and settles into
  equilibrium near $10$ copies per $\mu$L by $t=600$ (left inset).  A log plot of the
  virion population is shown (right) for a choice of parameters
  resulting in $R = 0.74$, yet the viral load remains large, around
  $10^4$ copies per $\mu$L up to time $t=60$.  For larger times, however, the result of
  Theorem \ref{ExtinctStability} must apply and hence the viral
  population sharply decreases to zero, with values around $10^{-8}$ copies per $\mu$L
  by $t=600$ (right inset). }
\label{fig1}
\end{figure}
\end{center}

With these issues now in context, we perform a similar computational
study to estimate the probability of persistence. To simulate the
dynamics of the virus, we will employ a Monte-Carlo method along with
a traditional Runge-Kutta solver to compute solutions of the corresponding systems
of ordinary differential equations given by (\ref{3CM}).

\subsection{Sampling} \label{sec:sampling}
We are interested in examining multiple definitions of viral persistence
and allowing for a different parameter distribution than that of \cite{TuckShip}.
Hence, we consider two different cases.
In the first case, we sample from truncated normal distributions
as in \cite{TuckShip} to determine the values of the random variable coefficients $\lambda,
\mu, k, \delta$ and $p$, while keeping the parameter $c=3$ constant
throughout.
The probability of persistence is then estimated both using the time-asymptotic definition of
viral persistence (i.e. $R > 1$) and our new finite-time definition (i.e., $V(100) \geq 50$).
In the second case, we investigate the influence of the distribution of parameters by
sampling from uniform and triangular distributions, as previously performed for this system while considering
the additional effects of viral mutation \cite{RFP}.
Here, viral promoters (i.e., those parameters which lead to large reproductive numbers)
are sampled from uniform distributions, while viral inhibitors (in this case, death rates) are
sampled from triangular distributions, data for both of which are taken from Table \ref{tab1}. 
In particular, the promoter $k$ is sampled from a uniform distribution over the interval
$(1.9 \times 10^{-4}, 4.8 \times 10^{-3})$ $\mu$L/day similar to \cite{RFP}, and $p$ is sampled from another uniform distribution over
the interval $(98, 7100)$ day$^{-1}$.
As in \cite{BD}, we assume an asymmetric triangular distribution
$\mathrm{Tri}(0.0043, 0.01089, 0.02)$ day$^{-1}$ for $\mu$, the death rate of uninfected $T$-cells, where $0.0043$ is the
minimum value, $0.01089$ is the peak (occurring at the mean recorded value in Table \ref{tab1}) and $0.02$ is the maximum value.
The growth parameter $\lambda$ is set to be $10\mu$ as in \cite{TuckShip},
and the viral clearance rate $c$ is held constant at $3$ as in \cite{SCCDHP, TuckShip}. 
Finally, the death rate of infected $T$-cells, $\delta$ is sampled from another triangular distribution
$\mathrm{Tri}(0.13, 0.366, 0.8)$ day$^{-1}$.
Then, as before, the probability of persistence is estimated separately using the $R$ definition of
viral persistence and our finite-time horizon.

Of course, because our condition depends upon solving the ODEs until a specific time, 
our results may now have a strong dependence on initial conditions.  Thus, to generate
a suitable range of initial data, we estimate the probabilities of persistence and extinction
considering a variety of initial conditions.  In particular, we choose values of $T_0$ between
$100-1000$ cells/$\mu$L, initial viral loads $V_0$ between $100-500$ virions/$\mu$L, and
fix the initial infected $T$-cell population at $0$.  These values are obtained from similar initial
conditions of previous studies \cite{PerelsonSIAM, PKdB}.

\subsection{Results}

\begin{figure}[t]
\begin{center}
\includegraphics[scale=.6]{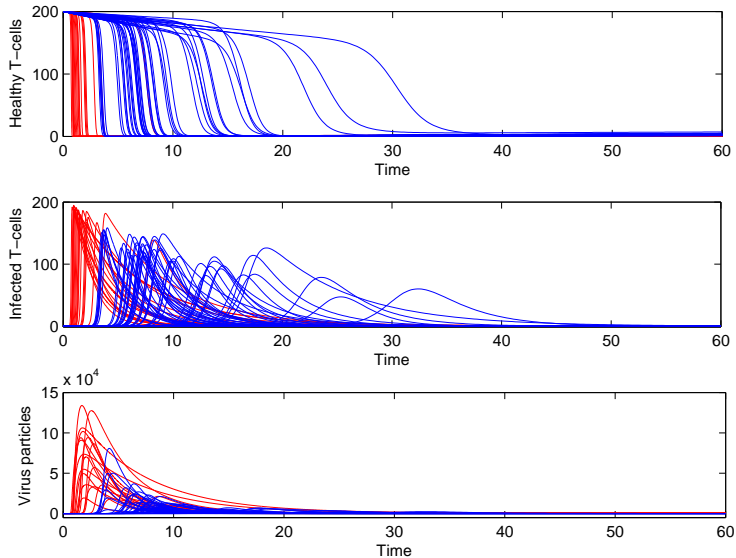}
\caption{\footnotesize Sample paths of the healthy T-cell, infected T-cell, and virion populations}
\label{samplepaths}
\end{center}
\end{figure}

According to the described computational methods, $500,000$
simulations were conducted to determine the probability of persistence
of a specific infection.  Sample paths of associated trials are displayed in Figure \ref{samplepaths} 
and the results are summarized within Table \ref{table:rHorizons}.
Notice that the proportion of trials within our simulation that
resulted in $R \leq 1$ is much smaller - by a factor of ten or twenty -
than those which resulted in viral populations less than $50$ virions
per $\mu$L approximately three months after the initial infection.
Hence, a standard mathematical definition of extinction, namely
$\lim_{t \to \infty} V(t) = 0$ which (by Theorem
\ref{ExtinctStability}) is equivalent to $R \leq 1$, appears to be insufficient to
accurately describe the behavior of the viral population on the
timescales of biological relevance, specifically during the time
period up to a few months after initial infection.  Instead, a more
precise determination of viral extinction can be made by measuring
whether the virus population will remain below the current detectable
threshold of $50$ virions per $\mu$L, and under this measure, the
probability of extinction is much larger.  As can be seen by Table
\ref{table:rHorizons}, the probability of extinction by the former measure is
only around $1-2\%$, while the probability jumps to
approximately $8-15\%$ under the latter notion that we have proposed.
In some types of transmission, such as passage of the disease from a
mother to an unborn child or through needlesticks, this percentage is
much closer to current estimates of the probability of extinction
after initial infection than the criteria $R\leq 1$.

\begin{table}[t]
\centering
\begin{tabular}{|c|c|c|}
\hline 
Parameter Distribution & $\mathbb{P} \left (R \leq 1 \right )$  & $\mathbb{P} \left ( V(60)<50 \frac{\mbox{copies}}{\mu \mbox{L}} \right )$ \\ \hline  Truncated & 0.0136 & 0.1510 \\  Normal & & \\ \hline Uniform \& & 0.0046 & 0.0859 \\ 
Triangular & & \\ \hline
\end{tabular}

\vspace{.3cm}
\caption{ \footnotesize Probabilities of virion extinction, using finite-time and time-asymptotic
definitions of extinction, as well as different parameter distributions 
(truncated normal and uniform/triangular, respectively). A total of $500,000$ trials were performed
for each case; for the time-asymptotic probabilities, initial conditions were varied
over 50 combinations, with $V(0)$ varying from 100-500 $\mu L^{-1}$ and $T(0)$ varying from
100-1000 $\mu L^{-1}$, and $I(0)$ set at 0. Hence, for each initial condition pair $(T(0),V(0))$, $10,000$
trials were performed.}
\label{table:rHorizons}
\end{table}


Based on the results of our simulations, which have been consolidated in 
Table \ref{table:rHorizons}, the probability of virus extinction using
the finite time definition is significantly less than the
extinction probability of the time-asymptotic definition by more
than an order of magnitude. This result marks the distinction
between the finite- and infinite-time results, and also alludes
to the eventual breakdown of the model for $t > 100$ as predicted results for (\ref{3CM})
stray from clinical results \cite{PerelsonSIAM, TuckShip}. 

\begin{center}
\begin{figure}[t]
\includegraphics[width=\textwidth]{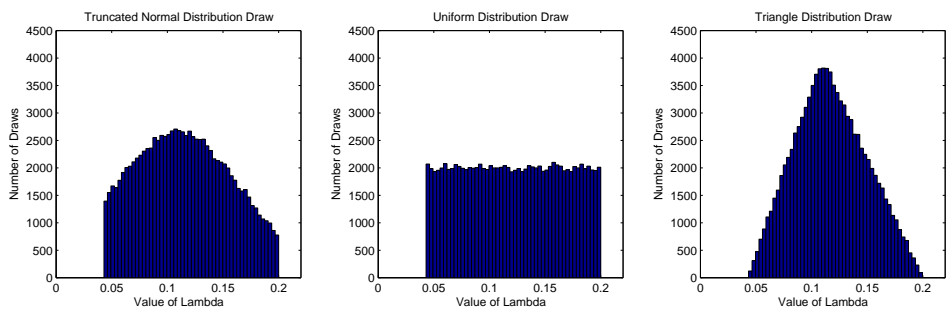}
\caption{ \footnotesize Average values of $\lambda$ over $100,000$ draws using a truncated normal distribution (left), a 
uniform distribution (center), and a triangular distribution (right).}
\label{randomDraws}
\end{figure}
\end{center}

While the probability of viral extinction associated with the finite-time 
definition remained ten times higher than the corresponding asymptotic-time
probability for each distribution, the extinction probabilities 
across the two distributions also varied by almost a factor of two. 
This  demonstrates the system's sensitivity to variations 
within the range of currently accepted values. 
In Figure \ref{randomDraws} the difference between parameter distributions is
shown, providing a description of how variations in parameter 
values can prompt a change in the model behavior. 


\begin{figure}[t]
\centering 
\includegraphics[scale = 0.37]{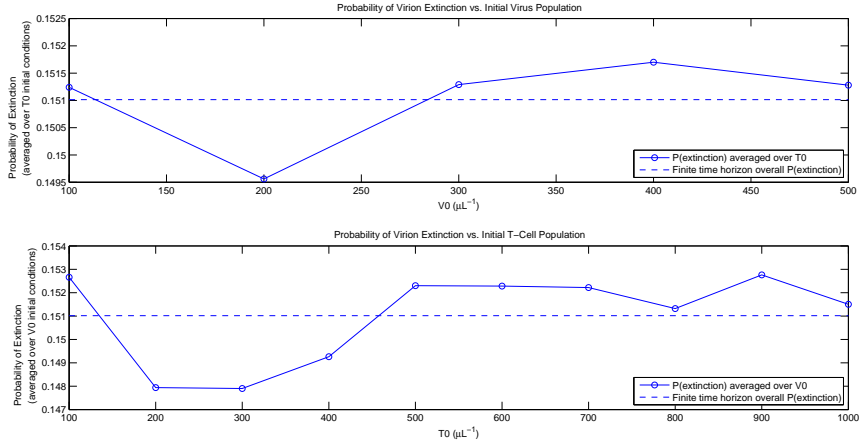}
\caption{ \footnotesize From parameters drawn from truncated normal distributions (as described in Section \ref{sec:sampling}), 
these graphs show probabilities of virus extinction over varying initial conditions. $V(0)$ is varied from 100 to 500 while 
$T(0)$ is held constant (top), and $T(0)$ is varied from 100-1000 while $V(0)$ is held constant (bottom). $I(0)$ is kept 
fixed at $0$ for all trials. Throughout all simulations, the variation in extinction probability remains small.}
\label{truncAvg}
\end{figure}

Lastly, it was expected 
that the initial conditions for T-cell and virus populations would 
have a significant impact on the probability of virus extinction, 
but based on our simulations this is not the case. As seen in 
Figures \ref{truncAvg} and \ref{triAvg}, for both distributions the 
initial conditions played little role: for each distribution, when 
averaged over the initial T-cell and initial virion conditions, the 
probability of persistence varied less than 5$\%$ from the mean 
value (averaged over all initial conditions). In addition, there is 
no clear pattern in the residuals of the probability of persistence 
of each initial condition relative to the overall probability, 
which suggests that the initial conditions have little to no effect 
to the disease's end behavior. This same phenomena occurs in the 
asymptotic-time horizon definition of persistence, where the 
initial conditions are irrelevant to the probability of persistence 
of the virus. 

\begin{figure}[t]
\centering 
\includegraphics[scale = 0.37]{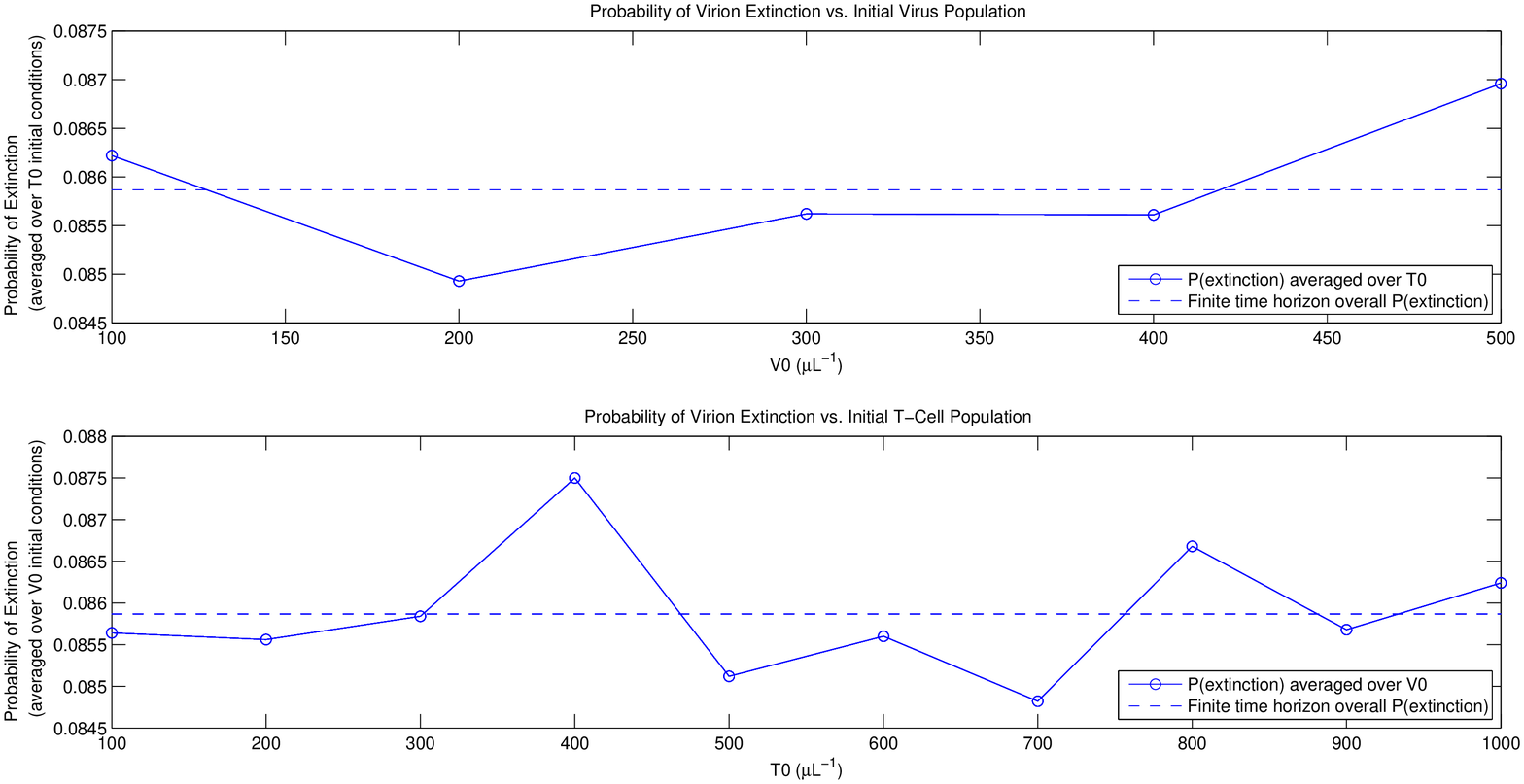}
\caption{\footnotesize From parameters sampled from uniform and triangular distributions (as described in Section 
\ref{sec:sampling}), these graphs show probabilities of virus extinction over varying initial conditions. $V(0)$ is varied from 
100 to 500 while $T(0)$ is held constant (top), and $T(0)$ is varied from 100-1000 while $V(0)$ is held constant (bottom). 
$I(0)$ is kept fixed at $0$ for all trials. }
\label{triAvg}
\end{figure}




\section{Conclusions}

Three main accomplishments are noted within the current article.
First, a local mathematical analysis was performed, and theorems
were proved to justify the viability and utility of the three-component
model.
Second, the local asymptotic stability of steady states were proved
and used to define mathematical notions for viral persistence and
extinction.  These conditions then inspire ODE-free simulations
of the in-host viral dynamics merely be sampling from distributions
describing the variation of parameters amongst a population of
exposed individuals.
Finally, we proposed alternative definitions for the notions of viral persistence
and extinction, which possess a finite time horizon, rather than
depending only upon the asymptotic limit as $t \to \infty$.  
Simulations were performed to measure the differences in these criteria
and their dependence on the probability distribution of parameters. 
We note that the end viral population, $V(100)$, could be less than the threshold of
detectability permitted by modern science much more often than
predicted by the associated value of the reproduction ration $R$. This indicates that the methods used 
in \cite{TuckShip} are not as accurate as could be hoped for.
Hence, it should be clear that a full simulation of the model is required to
obtain accurate results regarding the probability of developing a persistent infection. 

With full simulation of the ordinary differential equations, we
discovered that viral persistence occurred at a rate of roughly ten
times that suggested in \cite{TuckShip}. 
This proves rather conclusively the finite time and asymptotic limits conditions
while related, do not yield identical predictions, and simulation is necessary to
realize the full implications of the model. 
In reality, persistence has been estimated to occur at a rate of up to $90\%$, if the virus is transmitted by
blood transfusion, or around $1\%$ if transmitted via sexual intercourse
\cite{Gray}. While \cite{TuckShip} arrived at an average probability of
persistence around $93-99\%$ and we approximated this figure to be around $85-92\%$, recall that these coefficients were drawn from
a biased population of HIV-infected individuals because the data
arose from people known to have already developed
a persistent HIV infection.  Additionally, the previous estimates already account for the probability of transmission within 
them, while our estimates of persistence and those of \cite{TuckShip} specifically assume that transmission has occurred within a new host.

Stochastic models, such as in \cite{TuckLC}, that utilize Brownian
motion to incorporate additional random effects stemming from factors
outside of parameter estimation could provide additional improvement
to our estimates of persistence, and this could be the goal of a future
project.  Due to the lack of mathematical tools to analyze
stochastic differential equations, though, one would be forced to
resort to a more computational framework, rather than a mathematical
or asymptotic analysis, in order to glean information from the model.
With the development of more descriptive models and more advanced
analytical and computational tools, the probability of HIV persistence
after initial infection can be estimated with even greater precision.

\section{Acknowledgements} This work was completed under the direction
of Prof. Mrinal Raghupathi and Prof. Stephen Pankavich, and submitted
in partial fulfillment of MIDN Roemer's Trident Scholar thesis
requirement.  Additionally, the authors were supported in part by
National Science Foundation grants DMS-0908413 and DMS-1211667.


\begin{bibdiv}
\begin{biblist}

%
%
%

\bib{Bartle}{book}{
   author={Bartle, R. G},
   title={Introduction to Real Analysis},
   publisher={Wiley},
   place={Dordrecht},
   date={2000},
   isbn={0471433314},
}

\bib{BD}{article}{
author = {Blower, S.M.},
author = {Dowlatabadi, H.,}
title = {Sensitivity and uncertainty analysis of complex models of disease
transmission: an HIV model, as an example}
journal={Int. Stat. Rev.},
volume={62},
   date={1994},
   pages={229--243 },
}

\bib{BCN}{article}{
author = {Bonhoeffer, Sebastian},
author = {Coffin, John M.}
author = {Nowak, Martin A.}
title = {Human immunodeficiency virus drug therapy and virus load}
journal={J Virology},
volume={71},
   date={1997},
   number={4},
   pages={3275--3278 },
}

\bib{DS}{article}{
   author={De Leenheer, Patrick},
   author={Smith, Hal L.},
   title={Virus dynamics: a global analysis},
   journal={SIAM J. Appl. Math.},
   volume={63},
   date={2003},
   number={4},
   pages={1313--1327},
   issn={0036-1399},
   review={\MR{1989905 (2004b:34136)}},
   doi={10.1137/S0036139902406905},
}


\bib{Gray}{article}{
   author={Gray, R H},
   author={Wawer, M J},
   author={Brookmeyer, R},
   author={Sewankambo, N},
   author={Serwadda, D},
   author={Wabwire-Mangen, F},
   author={Lutalo, T},
   author={Li, X},
   author={vanCott, T},
   author={Quinn, T},
   title={Probability of HIV-1 transmission per coital act in monogamous, heterosexual, HIV-1-discordant couples in Rakai, Uganda},
   journal={The Lancet},
   volume={357},
   date={2001},
   number={9263},
   pages={1149-1153},
   doi={110.1016/S0140-6736(00)04331-2},
}

\bib{Gronwall}{article}{
   author={Gronwall, T. H.},
   title={Note on the derivatives with respect to a parameter of the
   solutions of a system of differential equations},
   journal={Ann. of Math. (2)},
   volume={20},
   date={1919},
   number={4},
   pages={292--296},
   issn={0003-486X},
   review={\MR{1502565}},
   doi={10.2307/1967124},
}

\bib{HartmanGrobman}{article}{
  author={Hartman, P},
  title={A lemma in the theory of structural stability of differential equation},
  journal={Proc. A.M.S},
  volume={11},
  date={1960},
  number={4},
  pages={610-620},
  doi={10.2307/2034720},
}

\bib{Hurwitz}{article}{
   author={Hurwitz, A.},
   title={\"Uber die Bedingungen, unter welchen eine Gleichung nur Wurzeln
   mit negativen reellen Theilen besitzt},
   language={German},
   note={Reprinted from Math.\ Ann.\ {\bf 46} (1895), 273--284 [JFM
   26.0119.03]},
   conference={
      title={Stability theory},
      address={Ascona},
      date={1995},
   },
   book={
      series={Internat. Ser. Numer. Math.},
      volume={121},
      publisher={Birkh\"auser},
      place={Basel},
   },
   date={1996},
   pages={239--249},
   review={\MR{1416383 (97i:01017)}},
}

\bib{KW}{article}{
author = {Kirschner, Denise},
author = {Webb, Glenn F.}
title = {A Mathematical Model of Combined Drug Therapy of HIV Infection}
journal={Journal of Theoretical Medicine},
volume={1},
date={1998},
pages={25--34},
}

\bib{Korob}{article}{
   author={Korobeinikov, Andrei},
   title={Global properties of basic virus dynamics models},
   journal={Bull. Math. Biol.},
   volume={66},
   date={2004},
   number={4},
   pages={879--883},
   issn={0092-8240},
   review={\MR{2255781 (2007e:34096)}},
   doi={10.1016/j.bulm.2004.02.001},
}

\bib{Logan}{book}{
   author={Logan, J. David},
   title={Applied Mathematics},
   edition={3},
   publisher={Wiley-Interscience [John Wiley \& Sons]},
   place={Hoboken, NJ},
   date={2006},
   pages={xiv+529},
   isbn={978-0-471-74662-1},
   isbn={0-471-74662-2},
   review={\MR{2216049 (2006k:00002)}},
}


\bib{NB}{article}{
author = {Nowak, Martin A.},
author = {Bangham, Charles R.}
title = {Population Dynamics of Immune Responses to Persistent Viruses}
journal={Science},
volume={272},
date={1996},
number={5258},
pages={74--79 },
doi={10.1126/science.272.5258.74},
}

\bib{NM}{book}{
   author={Nowak, Martin A.},
   author={May, Robert M.},
   title={Virus dynamics},
   note={Mathematical principles of immunology and virology},
   publisher={Oxford University Press},
   place={Oxford},
   date={2000},
   pages={xii+237},
   isbn={0-19-850418-7},
   review={\MR{2009143}},
}


\bib{PerelsonSIAM}{article}{
   author={Perelson, Alan S.},
   author={Nelson, Patrick W.},
   title={Mathematical analysis of HIV-1 dynamics in vivo},
   journal={SIAM Rev.},
   volume={41},
   date={1999},
   number={1},
   pages={3--44 (electronic)},
   issn={0036-1445},
   review={\MR{1669741}},
   doi={10.1137/S0036144598335107},
}

\bib{PKdB}{article}{
   author={Perelson, Alan S.},
   author={Kirschner, Denise E.},
   author={de Boer, Rob},
   title={Dynamics of HIV Infection of CD4+ T-cells},
   journal={Math. Biosci.},
   volume={114},
   date={1993},
   pages={81--125},
}

\bib{Rib}{article}{
   author={Ribeiro, R.M.},
   author={Qin, L.},
   author={Chavez, L.L},
   author={Li, D.},
   author={Self, S.G},
   author={Perelson, A.S.},
   title={Estimation of the initial viral growth rate and basic reproductive number during acute HIV-1 infection},
   journal={J. Virol.},
   volume={84},
   date={2010}
   }

\bib{RFP}{article}{
   author={Rong, L.},
   author={Feng, Z.},
   author={Perelson, A.S.},
   title={Emergence of HIV-1 drug resistance during antiretroviral treatment},
   journal={Bull. Math Biol.},
   volume={69},
   date={2007},
   number={6},
   pages={2027--2060},

}

\bib{RP}{article}{
   author={Rong, Libin},
   author={Perelson, Alan S.},
   title={Modeling HIV persistence, the latent reservoir, and viral blips},
   journal={J. Theoret. Biol.},
   volume={260},
   date={2009},
   number={2},
   pages={308--331},
   issn={0022-5193},
   review={\MR{2973086}},
   doi={10.1016/j.jtbi.2009.06.011},
}

\bib{Routh}{article}{
   author={Routh, E. J.},
   title={Stability of a Dynamical System with two Independent Motions},
   journal={Proc. London Math. Soc.},
   volume={S1-5},
   number={1},
   pages={97},
   date={1877},
   issn={0024-6115},
   review={\MR{1577573}},
   doi={10.1112/plms/s1-5.1.97},
}

\bib{SCCDHP}{article}{
   author={Stafford, M.A.},
   author={Corey, L.},
   author={Cao, Y.},
   author={Daare, E.S.},
   author={Ho, D.D.},
   author={Perelson, A.S.},
   title={Modeling plasma virus concentration during primary HIV infection},
   journal={J. Theor. Biol.},
   volume={203},
   date={2000},
   pages={285--301},
}

\bib{TuckLC}{article}{
   author={Tuckwell, Henry C.},
   author={Le Corfec, Emmanuelle},
   title={A stochastic model for early HIV-1 population dynamics},
   journal={J. Theor. Bio.},
   volume={195},
   date={1998},
   pages={451--463},
}

\bib{TuckShip}{article}{
   author={Tuckwell, Henry C.},
   author={Shipman, Patrick D.},
   title={Predicting the probability of persistence of HIV infection with
   the standard model},
   journal={J. Biol. Systems},
   volume={19},
   date={2011},
   number={4},
   pages={747--762},
   issn={0218-3390},
   review={\MR{2870478}},
   doi={10.1142/S0218339011004147},
}

\end{biblist}
\end{bibdiv}

\bibliographystyle{siam}

\end{document}